\newtheorem{thm}{Theorem}[section]
\newtheorem{lem}[thm]{Lemma}
\theoremstyle{remark}
\newtheorem*{rem}{Remark}
\newcounter{remarkscounter}
\numberwithin{equation}{section}
\newcommand{\abs}[1]{\lvert #1 \rvert}
\newcommand{\norm}[1]{\lVert #1 \rVert}
\newcommand{\Or}{\operatorname{O}}
\newcommand{\NS}{\operatorname{NS}}
\newcommand{\lto}{\longrightarrow}
\newcommand{\lito}{\lhook\joinrel\longrightarrow}
\theoremstyle{definition}
\newtheorem{defn}[thm]{Definition}
\renewcommand{\bar}{\overline}
\numberwithin{equation}{subsection}
\title{Extended Klein model and a bound on curves with negative self-intersection}
\author{Xin Xiong}
\begin{document}

\begin{abstract}
Let $S$ be a reduced irreducible smooth projective surface over an algebraically closed field and $\mathcal{F}$ a collection of reduced irreducible
curves with negative self-intersection on $S$ such that no positive combination $aC_1 + bC_2$ is in the divisor class of a connected nef divisor.
We denote the Picard number of $S$ by $\rho(S)$. Chinburg and Stover showed that $\abs{\mathcal{F}} \leq R_{\rho(S) - 1}(\pi/2)$ where $R_n(\pi/2)$
is the strict hyperbolic kissing number. In this paper, we use an extension of the Klein model of hyperbolic space to show that
$\abs{\mathcal{F}} < uv^{\rho(S)}$ for some positive real numbers $u, v$.
\end{abstract}

\maketitle

\section{Introduction}
Let us call an irreducible reduced curve with negative self-intersection a negative curve. We will prove the following theorem.
\begin{thm} \label{thm1}
Let $S$ be an irreducible smooth projective surface over an algebraically closed field and $\mathcal{F}$ a collection of negative curves. If for $C_1, C_2 \in \mathcal{F}$ and
positive integers $a, b$, $aC_1 + bC_2$ is not in the divisor class of a connected nef divisor then $\abs{\mathcal{F}} < uv^{\rho(S)}$
for positive real numbers $u, v$ independent of $S$.
\end{thm}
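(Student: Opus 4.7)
The plan is to embed the normalised classes $\hat C = C/\sqrt{-C^2}$ of the negative curves as ultra-ideal points of the extended Klein model of hyperbolic $(\rho(S)-1)$-space, convert the non-nef hypothesis into a projective separation condition, and then conclude by a packing estimate whose bound grows only exponentially in the dimension. First I would work in the N\'eron--Severi space $N = N^1(S)_{\mathbb{R}}$ with its intersection pairing $\langle\cdot,\cdot\rangle$; by the Hodge index theorem this has signature $(1,\rho-1)$, where $\rho := \rho(S)$. Fix an ample class $h$ with $h^2 = 1$, giving an orthogonal decomposition $N = \mathbb{R}h \oplus h^\perp$ with $h^\perp$ negative definite. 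Each $\hat C$ is then a unit spacelike vector ($\langle\hat C,\hat C\rangle = -1$), and radial projection onto the affine slice $\{v : \langle v,h\rangle = 1\}$ realises the Klein ball model in $h^\perp \cong \mathbb{R}^{\rho-1}$, sending each $\hat C$ to a point $p_C$ in the exterior of the closed Klein ball $B$.

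Next I would extract the separation from the hypothesis. For distinct $C_1,C_2\in\mathcal{F}$ the quadratic $(aC_1+bC_2)^2$ in positive reals $a,b$ attains maximum value $\bigl((C_1\cdot C_2)^2 - C_1^2 C_2^2\bigr)/\abs{C_1^2}$ at $a/b = (C_1\cdot C_2)/\abs{C_1^2}$. A non-negative maximum would, after clearing denominators, yield an effective class with non-negative self-intersection, and a case analysis (using the irreducibility of the $C_i$ and the structure of the effective cone on a surface) would exhibit a connected nef representative, contradicting the assumption. Hence
\[
(C_1\cdot C_2)^2 < C_1^2 C_2^2, \qquad \text{i.e.,} \qquad \langle\hat C_1,\hat C_2\rangle < 1.
\]
Geometrically the projective line through $p_{C_1}$ and $p_{C_2}$ misses $B$, so the polar totally geodesic hyperplanes in $\mathbb{H}^{\rho-1}$ meet at a positive angle or are ultra-parallel. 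I would then assign to each $\hat C$ a subset of a compact slice of the extended Klein model---for instance a spherical cap on $S^{\rho-2}\subset h^\perp$, a polar half-space in $\mathbb{H}^{\rho-1}$, or a horoball, as appropriate to the precise form of the extended model---whose volume admits a universal lower bound $c > 0$ while the ambient slice has volume at most exponential in $\rho$. The projective incidence of the previous step should control the overlap multiplicity of these regions, and a volume comparison then delivers $\abs{\mathcal{F}} < uv^{\rho(S)}$ with $u,v > 0$ independent of $S$.

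The hard part will be this final packing step. The strict inequality $\langle\hat C_1,\hat C_2\rangle < 1$ can be arbitrarily nearly saturated when the $\abs{C_i^2}$ are large, so naive regions centred at the $p_C$ need not be uniformly disjoint; this is parallel to the obstruction that forced Chinburg--Stover to invoke the strict hyperbolic kissing number. The whole point of passing to the extended Klein model is that it should supply a compactification and an invariant measure in which ultra-ideal points admit uniformly separated neighbourhoods; combined with the integrality of the intersection pairing on $\Pic(S)$ (which sharpens the bound to $(C_1\cdot C_2)^2 \leq C_1^2 C_2^2 - 1$), this is what should convert the super-exponential kissing-number estimate into the exponential bound $uv^{\rho(S)}$.
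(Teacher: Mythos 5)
Your setup matches the paper's: both pass to the signature $(1,\rho-1)$ N\'eron--Severi space, realize the negative classes as spacelike (ultra-ideal) points of an extended Klein model, and translate the non-nef hypothesis into the inequality $(C_1\cdot C_2)^2 \leq C_1^2C_2^2$, i.e.\ $\langle\hat C_1,\hat C_2\rangle\leq 1$ after normalization. (Your claim that the inequality is strict is defensible but not yet proved: in the equality case the maximizing ratio $a/b=(C_1\cdot C_2)/\lvert C_1^2\rvert$ is rational, the resulting integral class has square zero, meets $C_1$, $C_2$, and every other irreducible curve non-negatively, and has connected support since $C_1\cdot C_2>0$; you should write this out rather than gesture at ``a case analysis.'')

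The genuine gap is the final packing step, which you yourself flag as ``the hard part'' and resolve only with ``should supply,'' ``should control,'' ``should convert.'' That step is the entire content of the theorem: the condition $\langle\hat C_1,\hat C_2\rangle<1$ alone is exactly the Chinburg--Stover kissing-number condition, and, as you correctly observe, it degenerates (take $C_i^2=-N$ and $C_1\cdot C_2=N-1$, so $\langle\hat C_1,\hat C_2\rangle=(N-1)/N\to 1$), so no region of uniformly bounded-below volume can be attached to each point; the integrality refinement $(C_1\cdot C_2)^2\leq C_1^2C_2^2-1$ does not repair this, by the same example. The paper's route around the obstruction is combinatorial rather than volumetric: writing each ultra-ideal point as a pair $(z_i,\theta_i)\in S^{n-1}\times(0,\pi)$ (center and angular radius of the polar cap), the condition $C_i\cdot C_j\geq 0$ becomes $\cos\delta_{ij}\leq\cos\theta_i\cos\theta_j$, hence essentially ``$z_j$ lies outside the cap of $c_i$'' ($\theta_i<\delta_{ij}$), while the non-nef condition becomes ``the caps overlap'' ($\theta_i+\theta_j\geq\delta_{ij}$). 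A family of balls that pairwise intersect but never contain one another's centers is then bounded by an explicit exponential in the dimension: normalize the minimal center distance to $1$, count the nearby balls by a crude volume bound, and count the distant ones by showing each excludes the others from a cone of definite angle. Your sketch uses only the overlap half of this dichotomy --- overlap alone permits, for instance, infinitely many concentric balls --- and omits the counting argument entirely, so as written the proposal does not establish the exponential bound.
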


In a recent paper, Chinburg and Stover showed using hyperbolic codes that for sufficiently large $\rho(X)$,
$\abs{\mathcal{F}} \leq R_{\rho(S) - 1}(\pi/2)$ where $R_k(\pi/2)$ denotes the strict hyperbolic kissing number in dimension $k$ and angular separation of $\pi / 2$.
\cite[\S 3]{cs11}.
We will show a similar bound on $\abs{\mathcal{F}}$ using an extension of the Klein disc model for hyperbolic space.

From the N\'{e}ron-Severi theorem, we know that the N\'{e}ron-Severi group $\NS(S)$ is a finitely generated abelian group of rank $\rho(S)$.
We may extend the intersection pairing via the map $\NS(S) \lto \NS(S) \otimes_\mathbb{Z} \mathbb{R}$
and by the Hodge index theorem \cite{hartag}, $\NS(S) \otimes_\mathbb{Z} \mathbb{R} \cong \mathbb{R}^{1, n}$ as an inner product space where $n = \rho(S) - 1$. Denote $\mathbb{R}^{1, n}$ is $\mathbb{R}^{n + 1}$ endowed a signature $(1, n)$ inner product $H(\cdot, \cdot)$.
In other words, tensoring with $\mathbb{R}$ extends the intersection pairing to be a signature $(1, n)$ inner product which we call $H(\cdot, \cdot)$.

\section{Extended Klein model}
We now introduce a model of $(\mathbb{R}^{1, n} - \{0\}) / \mathbb{R}^+$ that easily exhibits orthogonality with respect to $H(\cdot, \cdot)$.
We will see later that for theorem \ref{thm1} we only require the sign of $H(\cdot, \cdot)$.
In our notation, let $\mathbb{R}^{n+1}$ be parametrized by coordinates $x_0, \ldots, x_n$. We first remind the reader that the Klein disc model models the points of hyperbolic
$n$-space as the disc $\mathcal{K}^n = \{ 1 \} \times D^n \subset \mathbb{R}^{n + 1}$ where $D^n$ is the open disc of radius 1 centered on the origin \cite{cannon}.
Alternatively, given the hyperboloid model, we may define the Klein disc model as the projection onto the plane $x_0 = 1$ from the origin.

\begin{rem}
We will omit discussion of the hyperbolic metric on $\mathcal{K}^n$ since we need only orthogonality to prove \ref{thm1}.
Unless stated otherwise, all metrics within this paper will default to the Euclidean metric obtained via the isomorphism $\mathbb{R}^{1, n} \cong \mathbb{R}^{n + 1}$.
Additionally, $\norm{\cdot}$ will always denote the Euclidean norm and $\norm{\cdot}_H$ the hyperbolic norm induced by $H(\cdot, \cdot)$.
\end{rem}

For this contruction, define the map $\mathfrak{s} : (\mathbb{R}^{1, n} - \{0\}) / \mathbb{R}^+ \lto \{-1, 0, 1\}$ mapping a point $x$ to the sign of its norm $H(x, x)$.
Consider the discs $\mathcal{D}^+ = \{ 1 \} \times D^n, \hspace{8pt} \mathcal{D}^- = \{-1 \} \times D^n$. Their union $\mathcal{D} = \mathcal{D}^+ \cup \mathcal{D}^-$
is a section of $\mathfrak{s}^{-1}(-1)$. Likewise we may consider $\mathcal{C} = (-1, 1) \times S^{n - 1}$, which is a section of $\mathfrak{s}^{-1}(1)$.
We see that $\partial \mathcal{D} = \partial \mathcal{C}$ is a section of $\mathfrak{s}^{-1}(0)$.

\begin{figure}
\centering
\caption{extended Klein model when $n = 2$}
\includegraphics[scale=0.3]{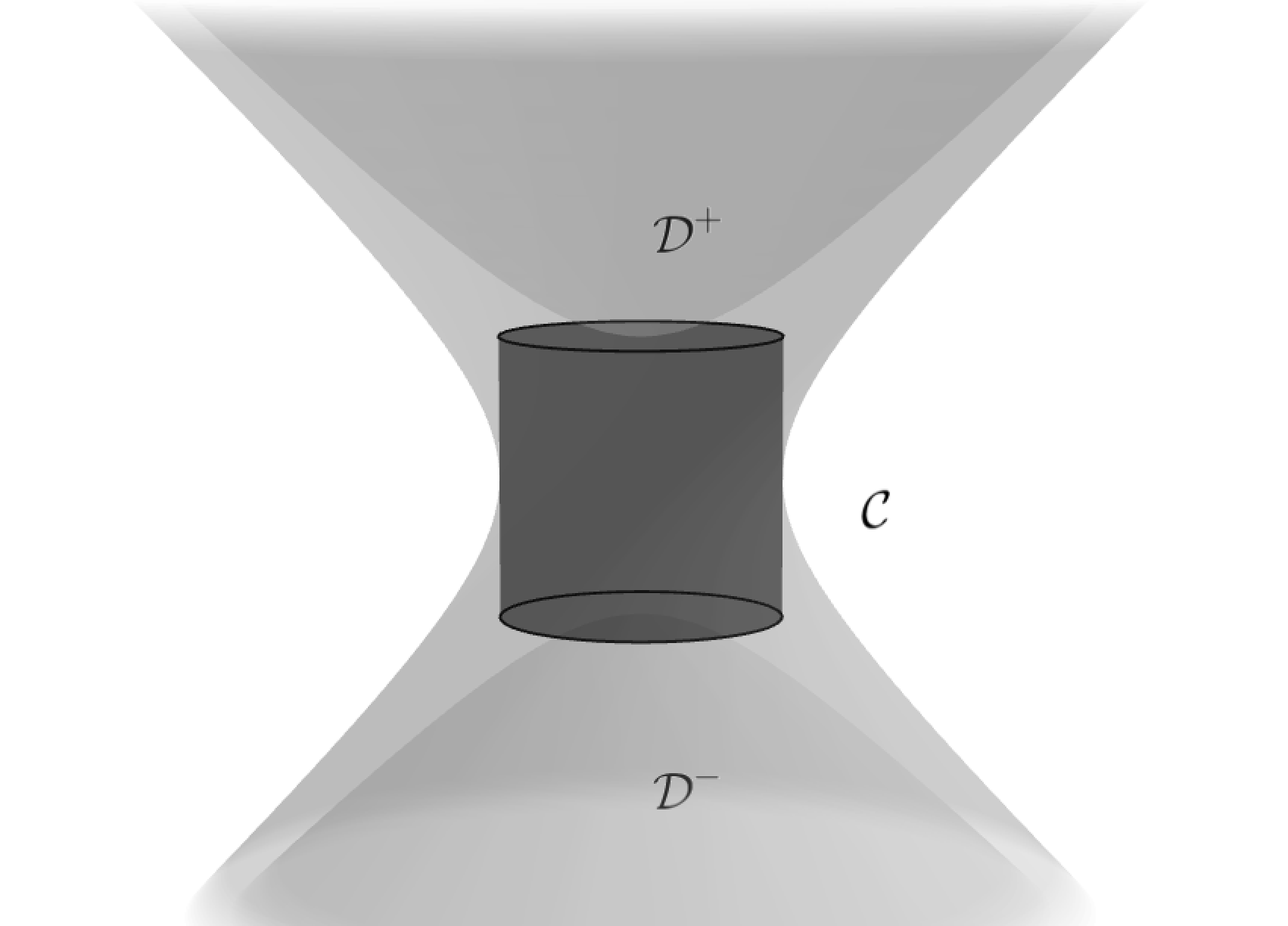}
\label{fig1}
\end{figure}

\begin{defn}[Extended Klein model]
The extended Klein model is $\mathcal{E} = \mathcal{D} \cup \mathcal{C} \cup \partial \mathcal{D}$. (See figure \ref{fig1}.)
\end{defn}

\label{constr} If we take a point $c \in \mathcal{C} \subset \mathcal{E}$, we may describe its orthogonal complement as follows.
Acting by $\Or(n) \subset \Or^+(1, n)$, we may assume that $c = (x_c, 1, 0, \ldots, 0)$.
Then the subspace $L \subset \mathbb{R}^{1, n}$ orthogonal to it intersects $\mathcal{D}$ in two disjoint $(n-1)$-discs, one within each of $\mathcal{D}^\pm$.
Without loss of generality, let us work on $\mathcal{D}^+$. Call $z$ the projection of $c$ onto $\partial \mathcal{D}^+$ and $D = L \cap \mathcal{D}^+$.
Using a symmetry argument and $H(\cdot, \cdot)$, we see that the central point of $D$ is the point on $D$ closest to $c$. We will call this point $y$.
From the coordinate representations $c = (x_c, 1, 0, \ldots, 0), z = (1, 1, 0, \ldots, 0), y = (1, x_y, 0, \ldots, 0)$ and acting by $\Or(n)$ to generalize,
we see that $\norm{y - z} = \norm{c - z}$. (See figure \ref{fig2}.)

\begin{figure}
\centering
\caption{$c$ with its orthogonal complement $L$ and its vertical projection $z$}
\includegraphics[scale=0.3]{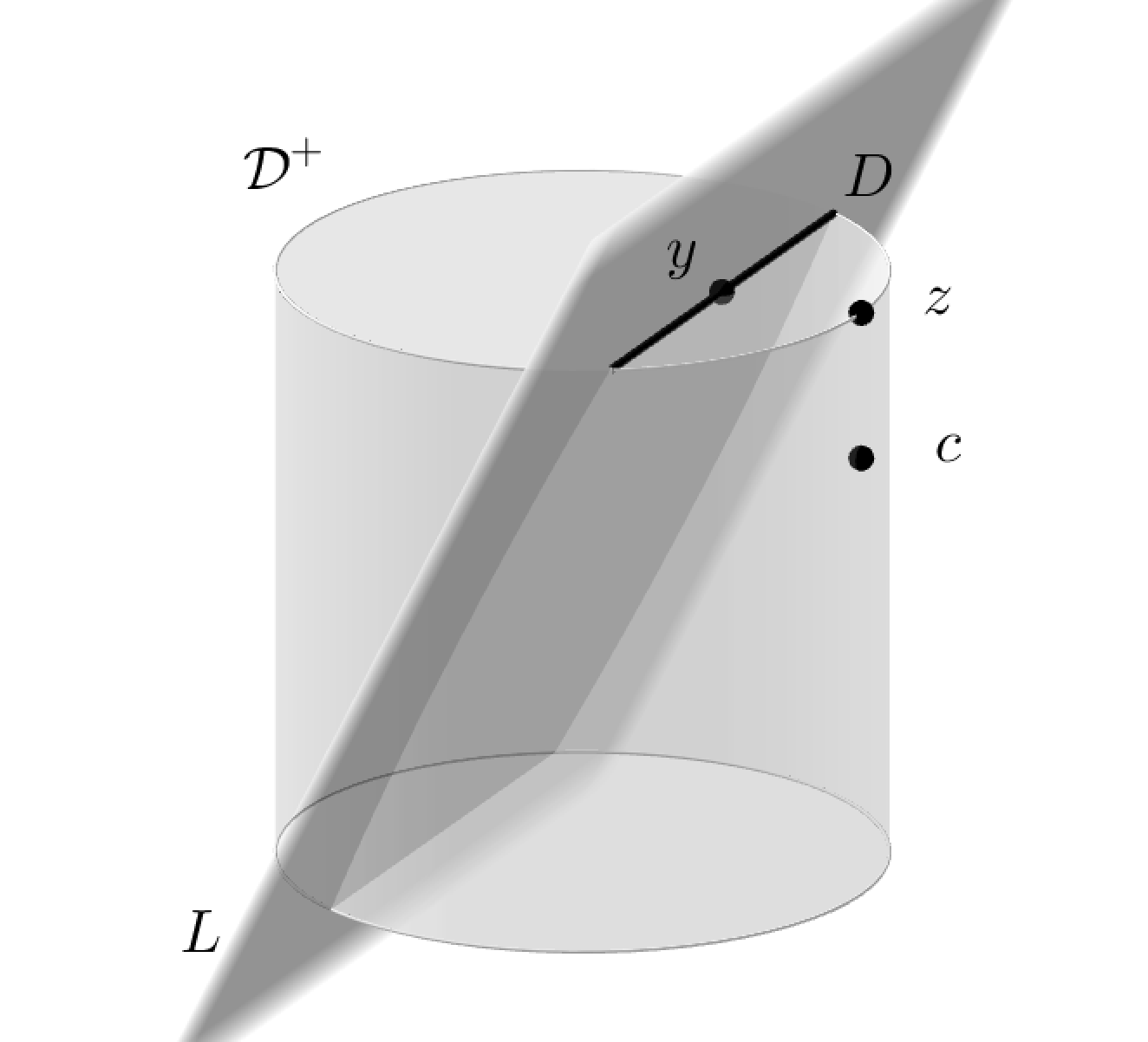}
\label{fig2}
\end{figure}

\begin{lem}
\label{lem2}
We may represent a point $c \in \mathcal{C}$ uniquely by $(z, \theta) \in S^{n - 1} \times (0, \pi)$ with the explicit correspondence given in the proof.
\end{lem}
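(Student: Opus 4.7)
The plan is to exhibit the bijection $\mathcal{C} \longleftrightarrow S^{n-1} \times (0, \pi)$ directly from the product structure of $\mathcal{C}$. Writing any $c \in \mathcal{C} = (-1,1) \times S^{n-1}$ as $c = (x_c, v)$ with $x_c \in (-1, 1)$ and $v \in S^{n-1}$ separates the two factors, and the lemma reduces to identifying $v$ with $z$ and reparametrizing $x_c$ by $\theta$.

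For the spatial factor I would recall from the construction preceding the lemma that $z$ is the vertical projection of $c$ onto $\partial \mathcal{D}^+ = \{1\} \times S^{n-1}$: this projection sends $(x_c, v)$ to $(1, v)$, and identifying $\partial \mathcal{D}^+$ with $S^{n-1}$ by discarding the first coordinate gives $z = v$. In particular $z \in S^{n-1}$ is uniquely determined by $c$, and in the normalized form $c = (x_c, 1, 0, \ldots, 0)$ used earlier one has $z = (1, 0, \ldots, 0)$. For the interval factor, since $\cos : (0, \pi) \to (-1, 1)$ is a bijection, define $\theta = \arccos(x_c) \in (0, \pi)$. The explicit correspondence is then
\[
c = (\cos\theta,\, z_1, \ldots, z_n) \longleftrightarrow (z, \theta),
\]
with inverse $(z, \theta) \mapsto (\cos\theta, z)$. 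Uniqueness and surjectivity are immediate: $x_c$ and $v$ are each uniquely recoverable from $c$, and any $(z, \theta) \in S^{n-1} \times (0, \pi)$ yields $(\cos\theta, z) \in (-1, 1) \times S^{n-1} = \mathcal{C}$.

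There is no serious obstacle here. The lemma is essentially a change of variables $x_c = \cos\theta$ on the interval factor of the cylinder, combined with the identification of the $S^{n-1}$ factor with the projection $z$ already fixed by the preceding construction. The only point requiring care is notational: one must treat $z$ consistently as an element of $S^{n-1}$ rather than of $\partial \mathcal{D}^+$, and check that the $\Or(n)$-normalization leading to $c = (x_c, 1, 0, \ldots, 0)$ is compatible with this parametrization, which it is because $\Or(n)$ rotates the spatial part of $c$ and the representative of $z$ in the same way.
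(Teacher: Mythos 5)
Your proof is correct and gives the same correspondence as the paper: the paper likewise takes $z$ to be the projection of $c$ onto $\partial\mathcal{D}^+ \cong S^{n-1}$, and its $\theta$, defined as the angular radius of the cap cut out by $L \cap \partial\mathcal{D}^+$ and computed as $\arccos(1 - \norm{y - z})$, equals your $\arccos(x_c)$ because $\norm{y - z} = \norm{c - z} = 1 - x_c$ in the normalized coordinates. The only difference is presentational --- the paper phrases $\theta$ geometrically via the orthogonal complement (the interpretation actually used in Theorem \ref{thm2}), whereas you obtain it as a direct change of variables on the cylinder $(-1,1)\times S^{n-1}$, which has the minor advantage of making bijectivity explicit.
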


\begin{proof}
With discussion and notation from the last paragraph, let $S^{n - 1}$ be identified with $\partial \mathcal{D}^+$ above and $z$ the projection of $c$ onto the aforementioned sphere.
Let $\theta$ be the angular size of the cap defined by $L \cap \partial \mathcal{D}^+$ containing $z$. Equivalently, $\theta = \arccos(1 - \norm{y - z})$.
\end{proof}

\section{Proof of Theorem \ref{thm1}}
$\mathcal{F}$ must satisfy that for any $C_i, C_j \in \mathcal{F}$
\begin{enumerate}[label=(\Roman*), ref=\Roman*]
\item $C_i^2 < 0$ \label{Cond1}
\item $C_i \cdot C_j \geq 0$ \label{Cond2}
\item $(aC_i + bC_j)^2 \leq 0 \hspace{10pt} \forall a, b \in \mathbb{N}$ \label{Cond3} .
\end{enumerate}
The first two conditions are due to $C_i, C_j$ being curves of negative self-intersection.
If condition \ref{Cond3} fails, i.e. $(aC_i + B_j)^2 > 0$, then $(aC_i + B_j)$ is connected nef.
Note that these conditions are invariant under scaling by $\mathbb{R}^+$.
Let $\Phi$ be the composition of maps $(\NS(S) - \{0\}) \lto (\NS(S) \otimes_\mathbb{Z} \mathbb{R} - \{0\}) \lto \mathcal{E}$.
We map $\mathcal{F}$ through $\Phi$ then denote $c_i = \Phi(C_i)$ and $\mathcal{G} = \Phi(\mathcal{F})$.

\begin{thm}
\label{thm2}
Let $c_i \in \mathcal{G}$ be represented by $(z_i, \theta_i)$ as in lemma \ref{lem2}.
Let $\delta_{ij}$ be the angular distance between $z_i$ and $z_j$ on $\partial \mathcal{D}^+$.
$\mathcal{G}$ must satisfy
\begin{enumerate}[label=(\roman*), ref=\roman*]
\item $c_i \in \mathcal{C}$ \label{cond1}
\item $\cos \delta_{ij} \leq \cos \theta_i \cos \theta_j$ \label{cond2}
\item $\theta_i + \theta_j \geq \delta_{ij}$. \label{cond3}
\end{enumerate}
\end{thm}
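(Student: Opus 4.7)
The strategy is to translate conditions (\ref{Cond1})–(\ref{Cond3}) into trigonometric inequalities in the coordinates $(z_i, \theta_i)$ supplied by Lemma \ref{lem2}. Because $\Phi$ is a radial projection, each $c_i$ is a positive scalar multiple of $C_i$, so every sign-based relation among intersections on $\mathcal{F}$ transfers directly to the corresponding relation for $H$ on $\mathcal{G}$. In particular, (\ref{cond1}) follows immediately from (\ref{Cond1}): the hypothesis $C_i^2 < 0$ sends $c_i$ into the component $\mathcal{C} \subset \mathcal{E}$ on which Lemma \ref{lem2} provides the representation $(z_i, \theta_i)$.

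After applying the $\Or(n)$-action from the construction preceding Lemma \ref{lem2}, I would write $c_i = (\cos\theta_i, s_i)$ with $s_i \in S^{n-1}$ and $z_i = (1, s_i)$, so that $\cos\delta_{ij} = s_i \cdot s_j$. A direct computation in these coordinates yields
\[
H(c_i, c_i) = \sin^2\theta_i, \qquad H(c_i, c_j) = \cos\delta_{ij} - \cos\theta_i\cos\theta_j
\]
(up to an overall sign dictated by the chosen orientation of $H$, common to both expressions). Condition (\ref{Cond2}), namely $C_i \cdot C_j \geq 0$, then reads $H(c_i, c_j) \leq 0$, which is precisely (\ref{cond2}).

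The remaining case (\ref{cond3}) is where the main work lies. Expanding the translation of (\ref{Cond3}) through bilinearity of $H$ gives
\[
a^2\sin^2\theta_i + 2ab\,(\cos\delta_{ij} - \cos\theta_i\cos\theta_j) + b^2\sin^2\theta_j \geq 0
\]
for all $(a,b) \in \mathbb{N}^2$, and hence by continuity for all $(a,b) \in (0,\infty)^2$. Since $\sin^2\theta_i, \sin^2\theta_j > 0$ and the middle coefficient is $\leq 0$ by (\ref{cond2}), minimizing this quadratic in $a/b > 0$ reduces the condition to the discriminant inequality
\[
(\cos\theta_i\cos\theta_j - \cos\delta_{ij})^2 \leq \sin^2\theta_i\sin^2\theta_j.
\]
Taking the non-negative square root (whose sign is fixed by (\ref{cond2})) rearranges to $\cos(\theta_i+\theta_j) \leq \cos\delta_{ij}$. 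Since $\delta_{ij} \in [0, \pi]$, monotonicity of $\cos$ on $[0,\pi]$ yields (\ref{cond3}) when $\theta_i+\theta_j \leq \pi$; the opposite case $\theta_i+\theta_j > \pi$ is automatic since $\delta_{ij} \leq \pi$. The only delicate points are this sign-tracking for the square root and the case split on $\theta_i+\theta_j$; the rest is straightforward computation once $H$ has been written out in the $(\theta, s)$ coordinates.
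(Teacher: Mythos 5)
Your proof is correct, and for the substantive part --- passing from condition (\ref{Cond3}) to (\ref{cond3}) --- it takes a genuinely different route from the paper's. Both arguments start identically: writing $c_i = (\cos\theta_i, s_i)$ with $z_i = (1, s_i)$ and $\cos\delta_{ij} = s_i\cdot s_j$, conditions (\ref{Cond1}) and (\ref{Cond2}) become sign statements about $H(c_i,c_i)$ and $H(c_i,c_j)$; the paper phrases the latter as a half-space argument with the hyperplane $L_i$, while you simply expand the bilinear form, but the content is the same. For (\ref{Cond3}), however, the paper first normalizes $c_i = (0,1,0,\ldots,0)$ by an isometry in $\Or^+(1,n)$, optimizes $\norm{ac_i+c_j}_H^2$ in $a$, and then finishes with a planar triangle construction in $\mathcal{D}^+$ (reducing to $n=2$) to convert $\abs{\cos\theta_j}\leq\sin\delta_{ij}$ into (\ref{cond3}). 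You instead stay in the original coordinates, read (\ref{Cond3}) as positive semidefiniteness of the binary quadratic $a^2\sin^2\theta_i + 2ab(\cos\delta_{ij}-\cos\theta_i\cos\theta_j)+b^2\sin^2\theta_j$ on the positive quadrant, and reduce to the discriminant; the square root, whose branch is fixed by (\ref{cond2}), gives $\cos(\theta_i+\theta_j)\leq\cos\delta_{ij}$ and hence (\ref{cond3}) after the case split on whether $\theta_i+\theta_j$ exceeds $\pi$. Your version buys three things: it avoids the hyperbolic normalization (which alters $\theta_j$ and $\delta_{ij}$ and must be tracked carefully in the paper's computation of the optimal $a$); it makes explicit where (\ref{cond2}) is used, namely to place the vertex of the quadratic at a nonnegative ratio $a/b$ so that the unconstrained discriminant criterion applies on the constrained domain; and it handles cleanly the case $\theta_i+\theta_j>\pi$, where only the forward implication survives --- which is all the theorem's ``must satisfy'' asserts. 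What it gives up is the paper's geometric reading of (\ref{cond3}) as the statement that the closed discs $\bar{D_i}$ and $\bar{D_j}$ meet. The one blemish is sign bookkeeping: your displayed $H$ is the negative of the extended intersection form (so that $H(c_i,c_i)=\sin^2\theta_i$ coexists with $C_i^2<0$); you flag this and the final inequalities are the correct ones, but fixing a single convention at the outset would remove the need for the parenthetical disclaimer.
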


\begin{rem}
The supremum of $\abs{\mathcal{G}}$ when $\dim \mathcal{E} = n$ is the hyperbolic kissing number $\bar{R}_n(\pi / 2)$. This number is at least $R_n(\pi / 2)$ \cite{cs11}.
\end{rem}

\begin{proof}
We will show that the conditions denoted by the same roman numerals in theorem \ref{thm2} and the discussion above it are equivalent.
Conditions \ref{Cond1} and \ref{cond1} are equivalent since $\mathcal{C}$ contains precisely the elements of $c \in \mathcal{E}$ such that $\norm{c}_H < 0$.

Condition \ref{Cond2} is equivalent to $H(c_i, c_j) \geq 0$. With notation from page \pageref{constr}, we see that $L_i$ bisects $\mathcal{E}$ into two connected components.
By bicontinuity of the inner product, the component containing $z_i$ contains all points $x$ such that $H(c_i, x) < 0$.
Acting by $\Or(n)$, we may without loss of generality assume $c_i = (\cos \theta_i, 1, 0, \ldots, 0), \ c_j = (\cos \theta_j, \cos \delta_{ij}, \sin \delta_{ij}, 0, \ldots, 0)$.
Recall that $L_i \cap \mathcal{D} = \{ (\pm 1, \pm \cos \theta_i, x_2, \ldots, x_n) \}$, so $L_i$ intersects $x_0 = \cos \theta_2$ at
$\{ (\cos \theta_j, \cos \delta_{ij} \sin \delta_{ij}, x_2, \ldots, x_n) \}$. We then argue that $\cos \delta_{ij}$, the $x_1$ coordinate of $c_j$ not greater than
$\cos \theta_i \cos \theta_j$, which is condition \ref{cond2}. (See figure \ref{fig3}.)

\begin{figure}
\centering
\caption{projection of $c_j$, $L_i$, and the hyperplane $x_0 = \cos \theta_j$ onto the $x_0x_1$ plane}
\includegraphics[scale=0.3]{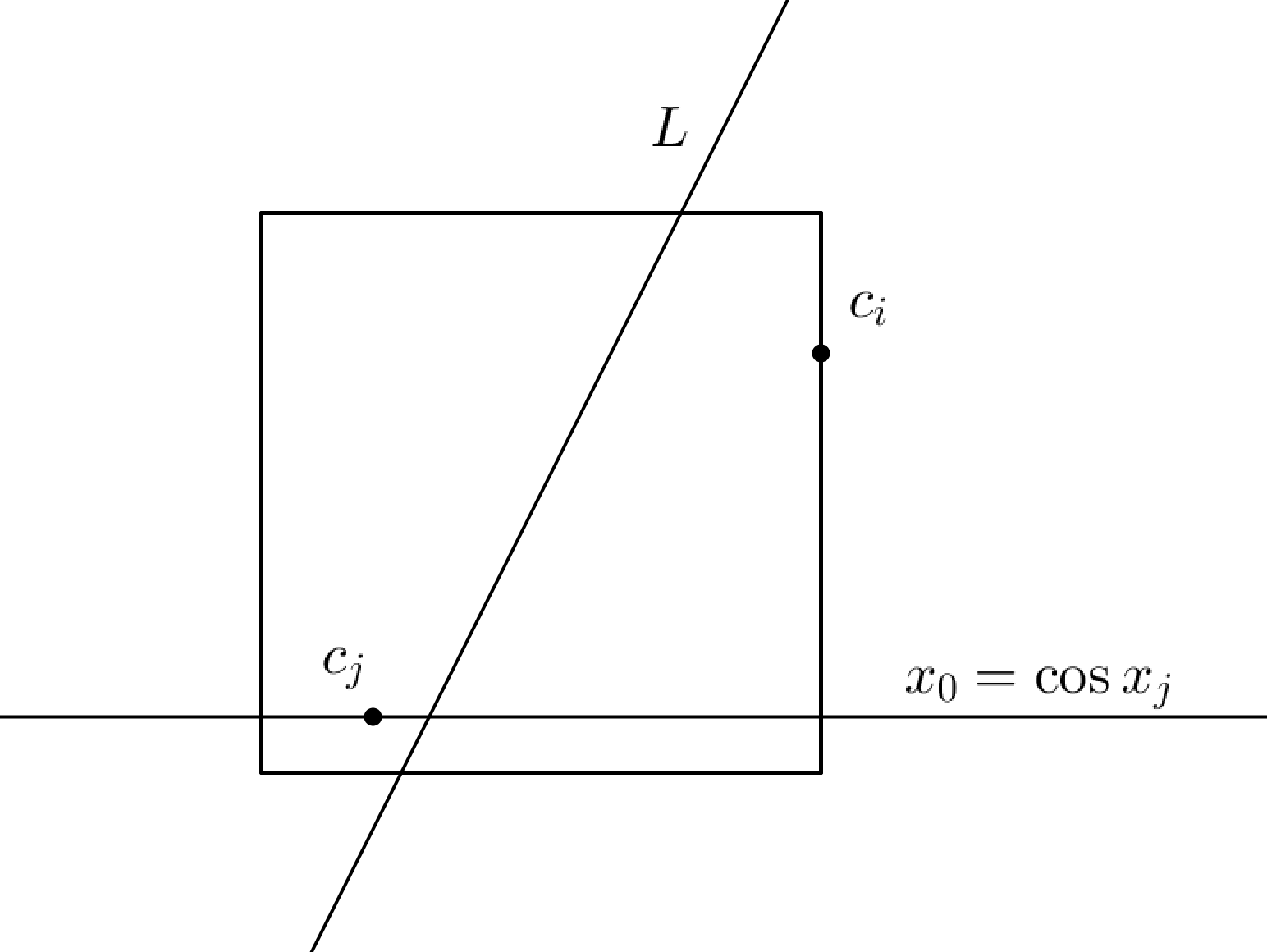}
\label{fig3}
\end{figure}

Now we turn to show conditions \ref{Cond3} and \ref{cond3} are equivalent. First we note that \ref{cond3} is equivalent to
$\bar{D_i} \cap \bar{D_j} \neq \emptyset$ with equality case $\dim (\bar{D_i} \cap \bar{D_j}) < \dim D_i$, i.e. when the interior $D_i, D_j$ do not intersect.
Passing through the map $\Phi$, we see that \ref{Cond3} is equivalent to $\norm{ac_i + c_j}_H^2 \leq 0$ for all $a \in \mathbb{R}^+$.
Since this norm is invariant under $\Or^+(1, n)$, we may act by isometry and assume without loss of generality that
$c_i = (0, 1, 0, \ldots, 0)$ and $c_j = (\cos \theta_j, \cos \delta_{ij}, \sin \delta_{ij}, 0, \ldots, 0)$.
Maximizing with respect to $a$ and differentiating, we find that $a = - \cos \delta_{ij}$ gives the maximum value of $\norm{ac_i + c_j}_H^2 = cos^2 \theta_j - \sin^2 \delta_{ij}$.
Thus we have condition \ref{Cond3} equivalent to $\abs{\cos \theta_j} \leq \sin \delta_{ij}$ when $c_i = (0, 1, 0, \ldots, 0)$.
From here, it suffices to show equivalence of $\abs{\cos \theta_j} = \sin \delta_{ij}$ and $\dim (\bar{D_i} \cap \bar{D_j}) < \dim D_i$ with $n = 2$ since intersection,
$\mathcal{D}^+$, and $\norm{\cdot}_H$ are all invariant under action by $\Or^+(1, n)$ and we may vary $\theta_j$ slightly to obtain the equivalence of \ref{Cond3} and \ref{cond3}.

\begin{figure}
\centering
\caption{construction to prove the equivalence of \ref{Cond3} and \ref{cond3}}
\includegraphics[scale=0.3]{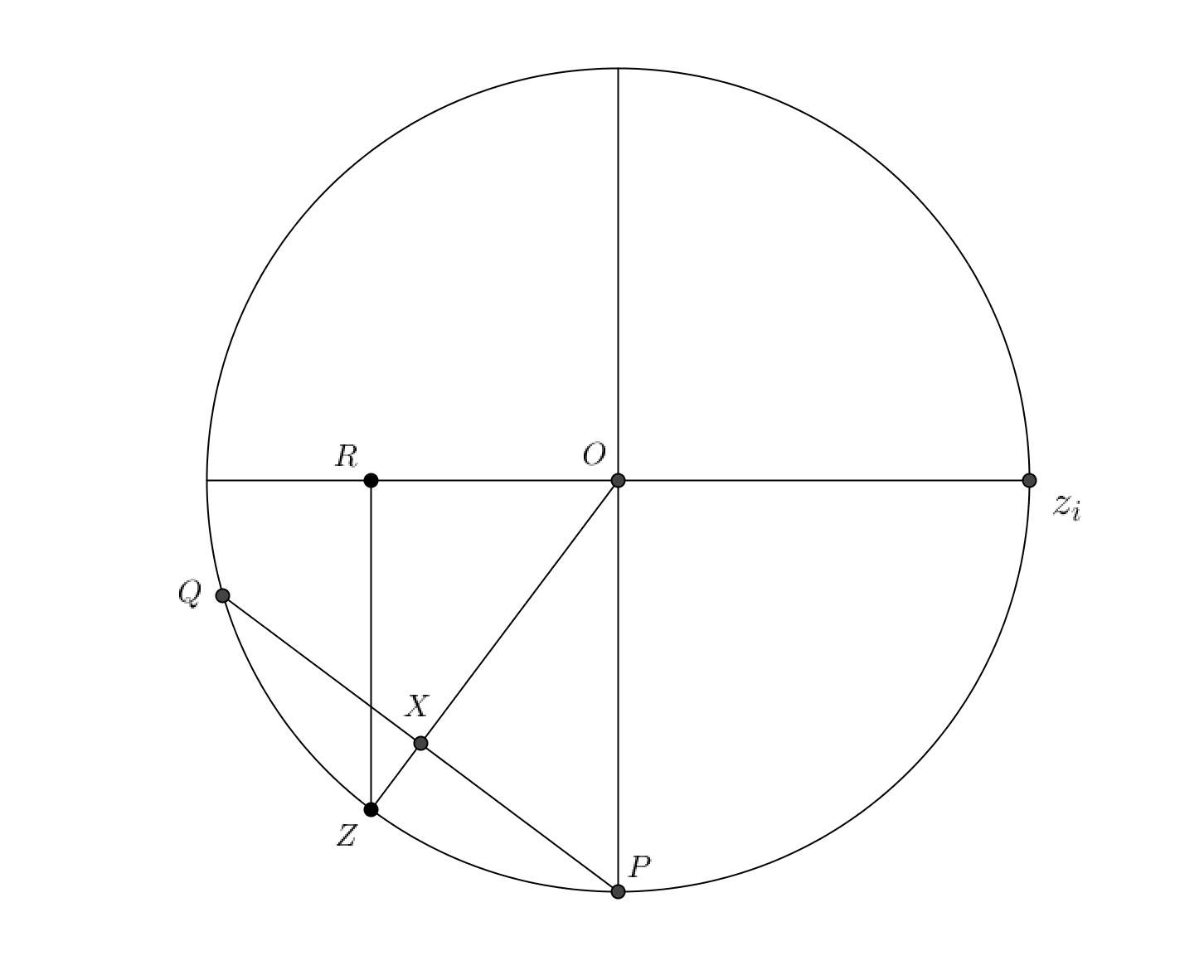}
\label{fig4}
\end{figure}

From here, we parametrize $\mathcal{D}^+ = \{ (x_1, x_2) | x_1 + x_2 \leq 1 \}$. Recall that $z_i = (1, 0)$. (See figure \ref{fig4} for the following construction.)
Let $O = (0, 0), P = (0, 1), Z = z_j, R = (\cos \delta_{ij}, 0)$. Let $Q \in \partial \mathcal{D}^+$ such that $\bar{OZ}$ bisects $\bar{PQ}$ and let $X$ be their intersection.
We then have $\triangle ROZ \equiv \triangle XPO$ and a fortiori $\abs{\bar{QZ}} = \abs{\bar{XO}}$.
Letting $\bar{D_j} = \bar{PQ}$ gives $\cos_j = \sin \delta_{ij}$ and $\theta_i + \theta_j = \delta{ij}$.
Small variations on $\theta_j$ show the equivalence of the inequalities \ref{Cond3} and \ref{cond3}.
\end{proof}

\begin{proof}[Proof of Theorem \ref{thm1}] 
First, we can reduce condition \ref{cond2} to
\begin{enumerate}[label=(\roman**),ref=\roman**,start=2]
\item $\theta_i < \delta_{ij}$. \label{cond2*}
\end{enumerate}
Next, we require that $c_i \geq 0$ for all $c_i \in \mathcal{G}$ by throwing out at most half of $\mathcal{G}$.
Embedding $\partial \mathcal{D}^+ \lito \mathbb{R}^n$, we may reduce \ref{cond2*} and \ref{cond3} to bounding a collection $\mathcal{B}$ of open balls $B_i = B(z_i, \theta_i)$ of radii $\theta_i$
centered at $z_i$ in Euclidean space such that $z_i \notin B_j$ and $B_i \cap B_j \neq \emptyset$ for any balls $B_i, B_j \in \mathcal{B}$. Consistent with prior notation, let $\delta_{ij} = \norm{z_i - z_j}$.

A priori, let $\abs{\mathcal{B}}$ be finite. Choose $B_0, B_1$ and rescale the ambient space such that $\delta_{01} = \min \delta_{ij} = 1$.
Let $\mathcal{R}^- = \{ p \in \mathbb{R}^n | \norm{p - z_0} < 2 \}$ and $\mathcal{R}^+ = \{ p \in \mathbb{R}^n | \norm{p - z_0} \geq 2 \}$
and $\mathcal{B}^\pm = \{ B_i \in \mathcal{B} | z_i \in \mathcal{R}^\pm \}$. We have a naive bound $\abs{\mathcal{B}^-} < 2^{n+1}$.
Let $B_i, B_j \in \mathcal{B}^+$ and introduce coordinates such that $z_0 = 0, \ z_i = (0, k, \ldots, 0)$ with $k \geq 2$, and $p_j = (x, y, 0, \ldots, 0)$.
From conditions \ref{cond2*} and \ref{cond3}, we have $\sqrt{x^2 + y^2} - 1 < \sqrt{x^2 + (y - k)^2}$, which along with $x^2 + y^2 \geq 4$ from $B_j \in \mathcal{B}^+$
tells us that $B_i$ restricts $B_j$ outside of a cone of angle at least $2\arctan \left( \frac{\sqrt{15}}{7} \right)$.
Thus, $\abs{\mathcal{B}^+}$ is bounded by some exponential function of $n$ and $\abs{\mathcal{F}}$ bounded by an exponential function of $\rho(S)$.
\end{proof}

\section*{Acknowledgements}
I would like to thank De-Qi Zhang for pointing out that $aC_1 + bC_2$ in theorem \ref{thm1} not being ample is too weak a hypothesis
and Ted Chinburg for suggesting that it should be reduced to connected nef.

\bibliography{bibl}
\end{document}